\newtheorem{theorem}{Theorem}[section]
\newtheorem{lemma}[theorem]{Lemma}
\newtheorem{corollary}[theorem]{Corollary}
\newtheorem{proposition}[theorem]{Proposition}
\theoremstyle{definition}
\newtheorem{definition}[theorem]{Definition}
\newtheorem{example}[theorem]{Example}
\newtheorem{note}[theorem]{Note}
\theoremstyle{remark}
\begin{document}

\title[Identities for generalized {E}uler polynomials]
{Identities for generalized {E}uler polynomials}

\author{Lin Jiu}
\address{Department of Mathematics,
Tulane University, New Orleans, LA 70118}
\email{ljiu@tulane.edu}

\author{Victor H. Moll}
\address{Department of Mathematics,
Tulane University, New Orleans, LA 70118}
\email{vhm@tulane.edu}

\author{Christophe Vignat}
\address{LSS-Supelec, Universit\'{e} Orsay Paris Sud 11, France}
\email{cvignat@tulane.edu}
\email{christophe.vignat@u-psud.fr}

%    General info
\subjclass[2010]{Primary 11B68, Secondary 60E05}

\date{\today}

\keywords{Generalized Euler polynomials, hyperbolic secant distributions, Chebyshev polynomials}

\begin{abstract}
For $N \in \mathbb{N}$, let $T_{N}$ be the Chebyshev polynomial of the first kind. Expressions for the sequence 
of numbers $p_{\ell}^{(N)}$, defined as the coefficients in the 
expansion of $1/T_{N}(1/z)$,  are provided. These coefficients give formulas for the classical Euler polynomials 
in terms of the so-called generalized Euler polynomials. The proofs are based on a probabilistic interpretation of the 
generalized Euler polynomials recently given by Klebanov et al.  Asymptotics of $p_{\ell}^{(N)}$ are also provided.
\end{abstract}

\maketitle

\newcommand{\ba}{\begin{eqnarray}}
\newcommand{\ea}{\end{eqnarray}}
\newcommand{\ift}{\int_{0}^{\infty}}
\newcommand{\nn}{\nonumber}
\newcommand{\no}{\noindent}
\newcommand{\lf}{\left\lfloor}
\newcommand{\rf}{\right\rfloor}
\newcommand{\realpart}{\mathop{\rm Re}\nolimits}
\newcommand{\imagpart}{\mathop{\rm Im}\nolimits}

\newcommand{\op}[1]{\ensuremath{\operatorname{#1}}}
\newcommand{\pFq}[5]{\ensuremath{{}_{#1}F_{#2} \left( \genfrac{}{}{0pt}{}{#3}
{#4} \bigg| {#5} \right)}}

\newtheorem{Definition}{\bf Definition}[section]
\newtheorem{Thm}[Definition]{\bf Theorem}
\newtheorem{Example}[Definition]{\bf Example}
\newtheorem{Lem}[Definition]{\bf Lemma}
\newtheorem{Cor}[Definition]{\bf Corollary}
\newtheorem{Prop}[Definition]{\bf Proposition}
\numberwithin{equation}{section}

\section{Introduction}
\label{sec-intro}

The Euler numbers $E_{n}$, defined by the generating function 
\begin{equation}
\frac{1}{\cosh z} = \sum_{n=0}^{\infty} E_{n} \frac{z^{n}}{n!}
\label{rec-10}
\end{equation}
\noindent
and the Euler polynomials $E_{n}(x)$ that generalize them 
\begin{equation}
\sum_{n=0}^{\infty} E_{n}(x) \frac{z^{n}}{n!} = \frac{2 e^{xz}}{e^{z}+1}
\label{rec-11}
\end{equation}
(\cite[9.630,9.651]{gradshteyn-2007a}) are examples of basic special 
functions.  It follows directly from the definition that 
$E_{n} = 0 $ for  $n$ odd.  Morever, the relation 
$E_{n} = 2^{n} E_{n} \left( \tfrac{1}{2} \right)$ follows by setting $x = \tfrac{1}{2}$ in  \eqref{rec-11}, 
replacing $z$ by $2z$ and  comparing with \eqref{rec-10}. 

Moreover, the identity 
\begin{equation}
\frac{2e^{xz}}{e^{z}+1} = \frac{ 2 e^{ \left( x - 1/2  \right) z }} { e^{z/2} + e^{-z/2}} 
\end{equation}
\noindent
produces
\begin{equation}
E_{n}(x) = \sum_{k=0}^{n} \binom{n}{k} \frac{E_{k}}{2^{k}} \left( x - \tfrac{1}{2} \right)^{n-k} = 
\sum_{k=0}^{n} \binom{n}{k} E_{k} \left( \tfrac{1}{2} \right)  \left( x - \tfrac{1}{2} \right)^{n-k},
\label{euler-poly}
\end{equation}
\noindent
that gives  $E_{n}(x)$ in  terms of the Euler numbers (see \cite[9.650]{gradshteyn-2007a}). 

\smallskip

The \textit{generalized Euler polynomials} $E_{n}^{(p)}(z)$, defined by the generating function
\begin{equation}
\sum_{n=0}^{\infty} E_{n}^{(p)}(x) \frac{z^{n}}{n!} = 
\left( \frac{2}{1+ e^{z} } \right)^{p} e^{xz}, \quad \text{ for } p \in \mathbb{N}
\end{equation}
\noindent
are polynomials extending $E_{n}(x)$, the case $p=1$.  These appear in Section 
24.16 of \cite{olver-2010a}. The definition leads directly to 
the expression
\begin{equation}
\label{recu-1a}
E_{n}^{(p)}(x) = \sum_{k=0}^{n} \binom{n}{k} x^{k} E_{n-k}^{(p)}(0),
\end{equation}
\noindent 
where the \textit{generalized Euler numbers} $E_{n}^{(p)}(0)$ are defined recursively by
\begin{equation}
\label{recu-1b}
E_{n}^{(p)}(0) = \sum_{k=0}^{n} \binom{n}{k} E_{k}^{(p-1)}(0) E_{n-k}(0),
\end{equation}
for $p > 1$ and initial condition $E_{n}^{(1)}(0) = E_{n}(0)$.

\section{A probabilistic representation of Euler polynomials and their generalizations}
\label{sec-prob-def}

This section discusses probabilistic representations of the Euler polynomials and their 
generalizations.  The results involve the expectation operator $\mathbb{E}$ defined by 
\begin{equation}
\mathbb{E} g(L) = \int g(x) f_{L}(x) \, dx,
\end{equation}
\noindent
with $f_{L}(x)$ the probability density of the random variable $L$ and 
for any function $g$ such that the integral exists. 

\begin{proposition}
Let $L$ be a random variable with hyperbolic secant density 
\begin{equation}
f_{L}(x) =   \text{sech }\pi x, \quad \text{ for } x \in \mathbb{R}.
\end{equation}
\noindent
Then the Euler polynomial is given by
\begin{equation}
E_{n}(x) = \mathbb{E} \left( x + \imath L - \tfrac{1}{2} \right)^{n}.
\label{rep-1}
\end{equation}
\end{proposition}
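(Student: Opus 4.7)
The strategy is to compute the exponential generating function of the right-hand side, identify it with the known generating function \eqref{rec-11} for Euler polynomials, and then match coefficients of $z^n/n!$.

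First I would compute the characteristic/moment generating function of $L$. Using the classical integral
\[
\int_{-\infty}^{\infty} \frac{\cos(bx)}{\cosh(\pi x)}\, dx = \text{sech}(b/2),
\]
and the evenness of the density $\text{sech}(\pi x)$, one obtains
\[
\mathbb{E}\, e^{izL} = \int_{-\infty}^{\infty} e^{izx}\,\text{sech}(\pi x)\, dx = \text{sech}(z/2) = \frac{2}{e^{z/2} + e^{-z/2}},
\]
valid for $z$ in a horizontal strip around the real axis (the density decays like $e^{-\pi|x|}$, so the relation extends analytically to $|\operatorname{Re} z|<\pi$).

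Next I would form the generating series of the right-hand side of \eqref{rep-1}. Assuming we can interchange the summation and expectation (justified by absolute convergence on a neighborhood of $z=0$, since the density has exponential decay),
\[
\sum_{n=0}^{\infty} \mathbb{E}\bigl(x + \imath L - \tfrac{1}{2}\bigr)^{n} \frac{z^{n}}{n!}
= \mathbb{E}\, e^{z(x + \imath L - 1/2)}
= e^{z(x - 1/2)}\, \mathbb{E}\, e^{\imath z L}.
\]
Substituting the MGF from the previous step,
\[
e^{z(x-1/2)} \cdot \frac{2}{e^{z/2} + e^{-z/2}}
= \frac{2\, e^{z(x-1/2)} \cdot e^{z/2}}{e^{z} + 1}
= \frac{2\, e^{xz}}{e^{z}+1},
\]
which is precisely the generating function \eqref{rec-11} of the Euler polynomials $E_{n}(x)$. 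Comparing coefficients of $z^{n}/n!$ yields \eqref{rep-1}.

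The main obstacle is really just the first step: verifying the value of the hyperbolic secant Fourier transform and justifying the analytic continuation to imaginary argument (equivalently, that the MGF of $\imath L$ equals $\text{sech}(z/2)$ in a neighborhood of the origin). Once that is in hand, the rest is a routine generating-function manipulation, and the interchange of sum and expectation is automatic from the exponential tail of $f_{L}$.
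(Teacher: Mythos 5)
Your proof is correct, but it takes a different route from the paper's. You work at the level of generating functions: you compute the characteristic function $\mathbb{E}\,e^{\imath zL}=\operatorname{sech}(z/2)$ from the classical Fourier transform of the hyperbolic secant, assemble the exponential generating function $\mathbb{E}\,e^{z(x+\imath L-1/2)}$, and match it against the defining generating function \eqref{rec-11}. The paper instead works coefficient by coefficient: it expands $(x-\tfrac12+\imath t)^{n}$ binomially, evaluates the moments $\int_{-\infty}^{\infty}t^{k}\operatorname{sech}(\pi t)\,dt=|E_{k}|/2^{k}$ (Gradshteyn--Ryzhik 3.523.4), uses $|E_{2n}|=(-1)^{n}E_{2n}$ to absorb the factor $\imath^{j}$, and then recognizes the resulting sum as the known expansion \eqref{euler-poly} of $E_{n}(x)$ in terms of Euler numbers. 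The two are two sides of the same coin --- the moment identity is the coefficient-wise form of the Fourier transform you invoke --- but your version needs only the definition \eqref{rec-11} and one transform evaluation, whereas the paper's needs the tabulated moment integral together with the prior identity \eqref{euler-poly}; your argument also makes the probabilistic content (that $\imath L$ has moment generating function $\operatorname{sech}(z/2)$) more transparent. One small slip: the region of convergence of $\mathbb{E}\,e^{\imath zL}$ is $|\operatorname{Im}z|<\pi$, not $|\operatorname{Re}z|<\pi$, since the density decays like $e^{-\pi|x|}$; this is harmless because you only need real $z$ near the origin, where everything converges absolutely and the interchange of sum and expectation is justified exactly as you say.
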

\begin{proof}
The right hand-side of \eqref{rep-1} is 
\begin{eqnarray*}
\mathbb{E} \left( x + \imath L - \tfrac{1}{2} \right)^{n} & = &  \int_{-\infty}^{\infty} 
\left( x - \tfrac{1}{2} + \imath  t \right)^{n} \text{sech }\pi t \, dt \\
& = & \sum_{j=0}^{n}  \binom{n}{j} \left( x - \tfrac{1}{2} \right)^{n-j} \imath^{j} 
\int_{-\infty}^{\infty} t^{j} \text{sech }\pi  \, dt 
\end{eqnarray*}
\noindent
The identity 
\begin{equation}
\int_{-\infty}^{\infty} t^{k} \text{sech } \pi t \, dt =  \frac{|E_{k}|}{2^{k}} 
\end{equation}
\noindent
holds for $k$ odd, since both sides vanish and for $k$ even, it 
appears as entry $3.523.4$ in \cite{gradshteyn-2007a}. A proof of this entry may be 
found in \cite{boyadzhiev-2013a}. Then, using $|E_{2n}| = (-1)^{n} E_{2n}$ (entry $9.633$ in \cite{gradshteyn-2007a})
\begin{equation}
\mathbb{E} ( x + \imath L - \tfrac{1}{2} )^{n} = \sum_{j=0}^{n} \binom{n}{j} (x  - \tfrac{1}{2} )^{n-j} 
\frac{E_{j}}{2^{j}}  = E_{n}(x).
\end{equation}
\end{proof}

There is a natural extension to the case of $E_{n}^{(p)}(x)$.  The proof is similar to 
the previous case, so it is omitted. 

\begin{theorem}
\label{thm-general}
Let  $p \in \mathbb{N}$ and $L_{j}, \, 1 \leq j \leq p$ a collection of independent 
identically distributed random variables with hyperbolic secant distribution. Then 
\begin{equation}
E_{n}^{(p)}(x) = \mathbb{E} \left[ x + \sum_{j=1}^{p}  \left( \imath L_{j} - \tfrac{1}{2} \right) \right]^{n}.
\end{equation}
\end{theorem}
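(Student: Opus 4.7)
The plan is to mimic the proof of the preceding proposition but at the level of generating functions rather than expanding binomially term by term, which would become unwieldy for a sum of $p$ independent variables.

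First I would form the exponential generating function of the right-hand side in the variable $z$. Since exponentials convert sums into products and the $L_j$ are independent, swapping expectation and sum gives
\begin{equation*}
\sum_{n=0}^{\infty} \frac{z^{n}}{n!} \mathbb{E}\left[ x + \sum_{j=1}^{p}\left(\imath L_{j} - \tfrac{1}{2}\right)\right]^{n}
 = e^{xz} e^{-pz/2} \prod_{j=1}^{p} \mathbb{E}\, e^{\imath z L_{j}}
 = e^{xz} e^{-pz/2} \bigl(\varphi_{L}(z)\bigr)^{p},
\end{equation*}
where $\varphi_{L}(z) = \mathbb{E}\, e^{\imath z L}$ is the characteristic function of a single hyperbolic secant variable.

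The key input is then an explicit formula for $\varphi_{L}$. This can be read off from the previous proposition: since $E_{n}(x) = \mathbb{E}(x + \imath L - \tfrac{1}{2})^{n}$ and the exponential generating function of the classical Euler polynomials equals $2e^{xz}/(e^{z}+1)$, summing $z^{n}/n!$ times \eqref{rep-1} yields
\begin{equation*}
e^{(x - 1/2)z}\,\varphi_{L}(z) \;=\; \frac{2 e^{xz}}{e^{z}+1},
\qquad\text{hence}\qquad \varphi_{L}(z) \;=\; \frac{2 e^{z/2}}{e^{z}+1} \;=\; \operatorname{sech}(z/2).
\end{equation*}
Alternatively, one can invoke entry $3.981.3$ of \cite{gradshteyn-2007a} for the Fourier transform of the hyperbolic secant.

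Substituting this back gives
\begin{equation*}
e^{xz} e^{-pz/2}\bigl(\operatorname{sech}(z/2)\bigr)^{p}
 = e^{xz} \left(\frac{2 e^{-z/2}}{e^{z/2}+e^{-z/2}}\right)^{\!p}
 = \left(\frac{2}{1+e^{z}}\right)^{\!p} e^{xz},
\end{equation*}
which is exactly the generating function defining $E_{n}^{(p)}(x)$. Comparing coefficients of $z^{n}/n!$ completes the proof. I do not anticipate a serious obstacle: the only non-routine step is the identification of $\varphi_{L}(z) = \operatorname{sech}(z/2)$, and that is already implicit in the preceding proposition. One minor point to be careful about is the interchange of expectation with the infinite sum, but this is justified for $|z|$ small since $\varphi_{L}$ is analytic in a strip around the real axis (the density $\operatorname{sech}\pi x$ has exponential tails), and the resulting identity of polynomials then extends to all $x$ and $z$.
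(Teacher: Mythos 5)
Your argument is correct, but it is not the route the paper takes: the paper omits the proof of this theorem with the remark that it is ``similar to the previous case,'' i.e.\ a direct moment computation in the style of the $p=1$ proposition, expanding $\bigl[(x-\tfrac{p}{2}) + \imath\sum_j L_j\bigr]^n$ and reducing to the integral $\int_{-\infty}^{\infty} t^{k}\operatorname{sech}(\pi t)\,dt = |E_{k}|/2^{k}$ together with independence. You instead work with exponential generating functions and the characteristic function $\varphi_{L}(z)=\mathbb{E}\,e^{\imath zL}=\operatorname{sech}(z/2)$, so that independence enters only through the factorization $\mathbb{E}\,e^{\imath z\sum_j L_j}=\varphi_{L}(z)^{p}$, and the identity $e^{xz}e^{-pz/2}\operatorname{sech}^{p}(z/2)=\bigl(2/(1+e^{z})\bigr)^{p}e^{xz}$ matches the defining generating function of $E_{n}^{(p)}$. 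This is a legitimate and arguably cleaner proof: it avoids the multinomial bookkeeping that a literal imitation of the $p=1$ argument would require for a sum of $p$ variables, and your extraction of $\varphi_{L}(z)=\operatorname{sech}(z/2)$ from the already-proved $p=1$ case (rather than from a table of Fourier transforms) keeps the argument self-contained. The only point needing the care you already flag is the interchange of $\mathbb{E}$ with the power series, valid for $|z|<\pi$ because the density $\operatorname{sech}(\pi x)$ has exponential tails; since one only compares Taylor coefficients at $z=0$, this suffices. The paper's (implicit) direct approach buys a term-by-term identity tied explicitly to entry $3.523.4$ of \cite{gradshteyn-2007a}, while yours buys brevity and a transparent use of independence.
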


\smallskip

In a recent paper, L.~B.~Klebanov et al. \cite{klebanov-2012a} considered random 
sums of independent random variables of the form 
\begin{equation}
\frac{1}{N} \sum_{j=1}^{\mu_{N}} L_{j}
\end{equation}
\noindent
where the random number of summands $\mu_{N}$ is independent of the $L{j}'s$ and 
is described below.

\begin{definition}
\label{def-mu}
Let $N \in \mathbb{N}$ and $T_{N}(z)$ be the Chebyshev polynomial of the first kind. 
The random variable $\mu_{N}$ taking  values in $\mathbb{N}$, is defined by its  generating 
function
\begin{equation}
\label{def-mu1}
\mathbb{E} z^{\mu_{N}} = \frac{1}{T_{N}(1/z)}.
\end{equation}
\end{definition}

Information about the Chebyshev polynomials appears  in \cite{gradshteyn-2007a}
and \cite{olver-2010a}. 

\begin{example}
\label{prob-N2}
Take $N=2$. Then $T_{2}(z) = 2z^{2}-1$ gives
\begin{equation}
\mathbb{E} z^{\mu_{2}} = \frac{1}{T_{2}(1/z)} = \frac{z^{2}}{2-z^{2}} = 
\sum_{\ell=1}^{\infty} \frac{z^{2 \ell}}{2^{\ell}}.
\end{equation}
Therefore $\mu_{2}$ takes the value $2 \ell$, with $\ell \in \mathbb{N}$, with probability
\begin{equation}
\Pr(\mu_{2} = 2 \ell ) = 2^{-\ell}.
\end{equation}
\end{example}

In \cite{klebanov-2012a}, Klebanov et al. prove the following result. 

\begin{theorem}[Klebanov et al.]
Assume $\{ L_{j} \}$ is a sequence of independent identically distributed  random 
variables with hyperbolic secant distribution. Then, for all $N \geq 2$ and $\mu_{N}$ defined 
in \eqref{def-mu1}, the random variable 
\begin{equation}
L := \frac{1}{N} \sum_{j=1}^{\mu_{N}} L_{j}
\label{def-L}
\end{equation}
\noindent
has the same  hyperbolic secant distribution.
\end{theorem}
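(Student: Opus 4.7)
The plan is to compute the characteristic function $\phi_L(t) = \mathbb{E} e^{\imath t L}$ and show it equals the characteristic function of the hyperbolic secant distribution, thereby identifying the laws. The key ingredient is the classical Fourier transform
\begin{equation*}
\int_{-\infty}^{\infty} e^{\imath t x} \,\text{sech}(\pi x) \, dx = \text{sech}\!\left(\tfrac{t}{2}\right),
\end{equation*}
which says each $L_j$ has characteristic function $\phi_{L_j}(t) = \text{sech}(t/2)$. This is a standard identity closely related to the integral entry used in the proof of the probabilistic representation above.

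Next I would condition on $\mu_N$. Since $\mu_N$ is independent of the $L_j$, and $\frac{1}{N}\sum_{j=1}^m L_j$ has characteristic function $\phi_{L_j}(t/N)^m$, summing over $m$ against the law of $\mu_N$ gives
\begin{equation*}
\phi_L(t) = \mathbb{E}\!\left[\phi_{L_j}(t/N)^{\mu_N}\right] = G_{\mu_N}\!\bigl(\text{sech}(t/(2N))\bigr),
\end{equation*}
where $G_{\mu_N}(z) = \mathbb{E} z^{\mu_N} = 1/T_N(1/z)$ by Definition \ref{def-mu}. Substituting $z = \text{sech}(t/(2N))$, so that $1/z = \cosh(t/(2N))$, yields
\begin{equation*}
\phi_L(t) = \frac{1}{T_N\bigl(\cosh(t/(2N))\bigr)}.
\end{equation*}

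The final step invokes the defining trigonometric identity $T_N(\cosh\theta) = \cosh(N\theta)$ with $\theta = t/(2N)$, which collapses the denominator to $\cosh(t/2)$. Hence
\begin{equation*}
\phi_L(t) = \frac{1}{\cosh(t/2)} = \text{sech}\!\left(\tfrac{t}{2}\right) = \phi_{L_j}(t),
\end{equation*}
and by the uniqueness theorem for characteristic functions $L$ has the hyperbolic secant density $\text{sech}(\pi x)$.

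I expect the conceptual content to be exactly the interplay between $G_{\mu_N}(z) = 1/T_N(1/z)$ and $T_N(\cosh\theta) = \cosh(N\theta)$; once these are lined up, the calculation is mechanical. The only technical concern is justifying the substitution of the (potentially complex-valued, for nonzero $t$) argument $\text{sech}(t/(2N))$ into the probability generating function $G_{\mu_N}$. Since Example \ref{prob-N2} and the general structure show $\mu_N$ takes values in $N\mathbb{N}$ with geometrically decaying tail, $G_{\mu_N}$ extends to an analytic function on a neighborhood of the closed unit disk, which covers the required arguments; alternatively, the whole derivation can be performed with real $t$ and the complex $\imath t$ appearing only inside the Fourier kernel, avoiding any analytic continuation issue.
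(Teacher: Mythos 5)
Your proof is correct. Note, however, that the paper does not actually prove this theorem: it is quoted from Klebanov et al.\ \cite{klebanov-2012a}, so there is no internal proof to compare against, and your characteristic-function computation is the standard (and essentially the original) argument. Two small points of tightening. First, your concern about substituting a complex argument into the generating function is moot: for real $t$ the quantity $\text{sech}(t/(2N))$ lies in $(0,1]$, hence inside the domain of convergence of the probability generating function $G_{\mu_N}$, so the fallback you mention at the end is not an ``alternative'' but simply the proof, and the analytic-continuation discussion can be deleted. Second, the identity $T_{N}(\cosh \theta) = \cosh(N\theta)$ deserves one line of justification: it follows from $T_{N}(\cos w) = \cos(Nw)$ with $w = \imath \theta$, which is legitimate because $T_{N}$ is a polynomial and the trigonometric identity holds for all complex $w$. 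The only hypothesis you use implicitly is that $\mu_{N}$ is a bona fide $\mathbb{N}$-valued random variable, i.e.\ that the coefficients $p_{\ell}^{(N)}$ of $1/T_{N}(1/z)$ are nonnegative and sum to $1$; this is part of the setup in Definition \ref{def-mu} and is established in \cite{klebanov-2012a}, so assuming it is fair, but it is worth flagging since it is exactly the nontrivial probabilistic content hiding behind the otherwise mechanical calculation.
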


\section{The Euler polynomials in terms of the generalized ones}
\label{sec-expression}

The identifies \eqref{recu-1a} and \eqref{recu-1b} can be used to express the generalized Euler 
polynomial $E_{n}^{(p)}(x)$ in terms of the standard Euler polynomials $E_{n}(x)$. However, to the 
best of our knowledge, there is no formula that allows to express $E_{n}(x)$ in terms of 
$E_{n}^{(p)}(x)$. This section presents such a formula.  

\begin{definition}
Let $N \in \mathbb{N}$. The sequence $\{ p_{\ell}^{(N)}: \, \ell =0, \, 1, \, \cdots \}$ is defined as the coefficients 
in the expansion 
\begin{equation}
\frac{1}{T_{N}(1/z)} = \sum_{\ell=0}^{\infty} p_{\ell}^{(N)} z^{\ell}.
\end{equation}
\end{definition}

Definition \ref{def-mu} shows that 
\begin{equation}
p_{\ell}^{(N)} = \Pr(\mu_{N} = \ell), \quad \text{ for } \ell \in \mathbb{N}.
\end{equation}
\noindent
The numbers $p_{\ell}^{(N)}$ will be referred as the \textit{probability numbers}.  

\begin{example}
For $N=2$, Example \ref{prob-N2} gives
\begin{equation}
p_{\ell}^{(2)} = \begin{cases}
0 & \text{ if } \ell \text{ is odd } \\
2^{-\ell/2} & \text{ if } \ell \text{ is even}, \ell  \neq 0.
\end{cases}
\end{equation}
\end{example}

\smallskip

The coefficients $p_{\ell}^{(N)}$ are now used to
produce expansions of $E_{n}(x)$, one for each $N \in \mathbb{N}$,  in terms of the generalized 
Euler polynomials.

\begin{theorem}
\label{thm-eu1}
The Euler polynomials satisfy, for all $N \in \mathbb{N}$,
\begin{equation}
E_{n}(x)  =  \frac{1}{N^{n}} \mathbb{E} \left[ E_{n}^{(\mu_{N})} 
\left( \tfrac{1}{2} \mu_{N} + N (x - \tfrac{1}{2} ) \right) \right]
\end{equation}
\end{theorem}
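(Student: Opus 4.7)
The plan is to combine the probabilistic representation of Theorem \ref{thm-general} with Klebanov's identity in distribution, so essentially no new analytic work is needed beyond a careful bookkeeping of the substitution.

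First, I would start from the probabilistic formula $E_{n}(x) = \mathbb{E}(x + \imath L - \tfrac{1}{2})^{n}$ proved in Section \ref{sec-prob-def}. By Klebanov's theorem, $L$ has the same distribution as $\frac{1}{N}\sum_{j=1}^{\mu_{N}} L_{j}$, so I may replace $L$ by this random sum without altering the expectation:
\begin{equation*}
E_{n}(x) \;=\; \mathbb{E}\!\left(x - \tfrac{1}{2} + \frac{\imath}{N}\sum_{j=1}^{\mu_{N}} L_{j}\right)^{\!n}
\;=\; \frac{1}{N^{n}}\,\mathbb{E}\!\left(N(x-\tfrac{1}{2}) + \imath\sum_{j=1}^{\mu_{N}} L_{j}\right)^{\!n}.
\end{equation*}

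Next, I would condition on $\mu_{N}$ and use the tower property of conditional expectation. Conditional on $\mu_{N}=p$, the inner expectation is over the i.i.d.\ sech variables $L_{1},\dots,L_{p}$, and the goal is to massage the argument into a form matching Theorem \ref{thm-general}. The trick is to write $N(x-\tfrac{1}{2}) = \bigl(\tfrac{1}{2}\mu_{N} + N(x-\tfrac{1}{2})\bigr) - \tfrac{1}{2}\mu_{N}$ and absorb the $-\tfrac{1}{2}\mu_{N}$ term into the sum, using $-\tfrac{1}{2}\mu_{N} = \sum_{j=1}^{\mu_{N}}(-\tfrac{1}{2})$. This gives
\begin{equation*}
N(x-\tfrac{1}{2}) + \imath\sum_{j=1}^{\mu_{N}} L_{j}
\;=\;\bigl(\tfrac{1}{2}\mu_{N} + N(x-\tfrac{1}{2})\bigr) + \sum_{j=1}^{\mu_{N}}\!\left(\imath L_{j} - \tfrac{1}{2}\right),
\end{equation*}
which is exactly the expression inside the $n$th power in the probabilistic representation of $E_{n}^{(p)}$ at the shifted point $\tfrac{1}{2}p + N(x-\tfrac{1}{2})$.

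Applying Theorem \ref{thm-general} with $p=\mu_{N}$ to the inner expectation therefore yields
\begin{equation*}
\mathbb{E}\!\left[\left(N(x-\tfrac{1}{2}) + \imath\sum_{j=1}^{\mu_{N}} L_{j}\right)^{\!n}\,\Big|\,\mu_{N}\right]
= E_{n}^{(\mu_{N})}\!\left(\tfrac{1}{2}\mu_{N} + N(x-\tfrac{1}{2})\right),
\end{equation*}
and taking the outer expectation over $\mu_{N}$ delivers the claimed identity. The only place where I expect a minor subtlety is ensuring the independence of $\mu_{N}$ from $\{L_{j}\}$ during the conditioning step so that Theorem \ref{thm-general} applies with a random value of $p$; this is built into the Klebanov setup, so it is conceptual rather than computational. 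No estimates or generating-function manipulations are required beyond the algebraic rewriting above.
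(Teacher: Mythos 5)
Your proposal is correct: the substitution of Klebanov's random sum into the representation \eqref{rep-1}, the algebraic rewriting $N(x-\tfrac{1}{2}) + \imath\sum_{j} L_{j} = \bigl(\tfrac{1}{2}\mu_{N}+N(x-\tfrac{1}{2})\bigr) + \sum_{j}(\imath L_{j}-\tfrac{1}{2})$, and the conditioning on $\mu_{N}$ together give exactly the claimed identity, and the independence of $\mu_{N}$ from the $L_{j}$'s that you flag is indeed part of the Klebanov setup. The ingredients are the same as in the paper's proof (the probabilistic representations and Klebanov's theorem), but your route is organized differently and is more direct: the paper first specializes to $x=\tfrac{1}{2}$ to obtain $\mathbb{E}\bigl[E_{n}^{(\mu_{N})}(\tfrac{1}{2}\mu_{N})\bigr]=N^{n}E_{n}(\tfrac{1}{2})$, then invokes the classical expansion \eqref{euler-poly} of $E_{n}(x)$ in terms of the $E_{k}(\tfrac{1}{2})$ and reassembles the general-$x$ statement with two applications of the binomial theorem. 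You bypass that detour entirely by keeping $x$ general from the start and performing the recentering inside the expectation, which eliminates the dependence on \eqref{euler-poly} and makes the conditioning step (only implicit in the paper's final lines) explicit. Both arguments are valid; yours is arguably the cleaner presentation, while the paper's makes the intermediate moment identity $\mathbb{E}\bigl[\imath\sum_{j=1}^{\mu_{N}}L_{j}\bigr]^{n}=N^{n}E_{n}(\tfrac{1}{2})$ visible as a statement of independent interest.
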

\begin{proof}
From  \eqref{rep-1} and \eqref{def-L} 
\begin{equation}
E_{n} \left( \tfrac{1}{2} \right) = 
\mathbb{E} (\imath L)^{n} = \frac{1}{N^{n}} 
\mathbb{E} \left[ \imath \sum_{j=1}^{\mu_{N}} L_{j} \right]^{n},
\end{equation}
with Theorem \ref{thm-general}, this yields
\begin{equation}
\mathbb{E} \left[ E_{n}^{(\mu_{N})} \left( \frac{\mu_{N}}{2} 
\right) \right]  = \mathbb{E} \left[ \imath  \sum_{j=1}^{\mu_{N}}  L_{j}  \right]^{n} 
= N^{n} E_{n} \left( \tfrac{1}{2} \right).
\end{equation}
%\begin{equation}
%E_{n} \left( \tfrac{1}{2} \right) = 
%\frac{1}{N^{n}} \mathbb{E} \left[ E_{n}^{(\mu_{N})} \left( \frac{\mu_{N}}{2} 
%\right) \right].
%\end{equation}
\noindent
Using identity \eqref{euler-poly}, it follows that
\begin{eqnarray*}
E_{n}(x) & = &  \sum_{k=0}^{n} \binom{n}{k} E_{k} \left( \tfrac{1}{2} \right) 
\left( x - \tfrac{1}{2} \right)^{n-k} \\
& = & \mathbb{E} \left[ \sum_{k=0}^{n} \binom{n}{k} N^{-k} 
E_{k}^{(\mu_{N})} \left( \tfrac{1}{2}\mu_{N} \right) 
\left( x - \tfrac{1}{2} \right)^{n-k} \right] \\
& = & \mathbb{E} \left[ \sum_{k=0}^{n} \binom{n}{k} N^{-k} 
( \imath L_{1} + \cdots + \imath L_{\mu_{N}} )^{k} \left( x - \tfrac{1}{2} \right)^{n-k} 
\right] \\
& = & \mathbb{E} \left[ \frac{1}{N^{n}}\sum_{k=0}^{n} \binom{n}{k}  
( \imath  L_{1} + \cdots +  \imath L_{\mu_{N}} )^{k} \left( N(x - \tfrac{1}{2}) 
\right)^{n-k} \right]  \\
& = & \mathbb{E} \left[ \frac{1}{N^{n}} \left(  
\imath  L_{1} + \cdots + \imath L_{\mu_{N}}  +  N(x - \tfrac{1}{2}) 
\right)^{n} \right]  \\
& = & \mathbb{E} \left[ \frac{1}{N^{n}} \left(  
\imath  L_{1} + \cdots + \imath L_{\mu_{N}}  +  z - \tfrac{1}{2} \mu_{N} 
\right)^{n} \right]  \\
& = & \frac{1}{N^{n}} \mathbb{E} \left[ E_{n}^{(\mu_{N})}(z) \right],
\end{eqnarray*}
\noindent
where $z = \tfrac{1}{2} \mu_{N} + N \left( x - \tfrac{1}{2} \right)$. This 
completes the proof.
\end{proof}

The next result is established using the fact that the expectation operator $\mathbb{E}$ satisfies 
\begin{equation}
\mathbb{E}[ h(\mu_{N}) ] = \sum_{k=0}^{\infty} p_{k}^{(N)} h(k), 
\end{equation}
\noindent 
for any function $h$ such that the right-hand side exists.

\begin{corollary}
\label{coro-eu1}
The Euler polynomials satisfy 
\begin{equation}
E_{n}(x)  =   \frac{1}{N^{n}} \sum_{k=N}^{\infty} p_{k}^{(N)} 
E_{n}^{(k)} \left( \tfrac{1}{2}k + N \left( x  - \tfrac{1}{2} \right) \right).
\label{form-exp1}
\end{equation}
\end{corollary}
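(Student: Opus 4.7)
The plan is to deduce Corollary \ref{coro-eu1} as a direct consequence of Theorem \ref{thm-eu1} by unpacking the expectation over $\mu_N$ as a probability-weighted sum. Starting from
\[
E_{n}(x) = \frac{1}{N^{n}} \mathbb{E} \left[ E_{n}^{(\mu_{N})} \left( \tfrac{1}{2} \mu_{N} + N (x - \tfrac{1}{2}) \right) \right],
\]
I would apply the identity $\mathbb{E}[h(\mu_N)] = \sum_{k=0}^{\infty} p_{k}^{(N)} h(k)$, recorded just before the corollary, to the function $h(k) = E_{n}^{(k)}\!\left(\tfrac{1}{2}k + N(x - \tfrac{1}{2})\right)$. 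This step is formal and gives
\[
E_{n}(x) = \frac{1}{N^{n}} \sum_{k=0}^{\infty} p_{k}^{(N)} E_{n}^{(k)}\!\left(\tfrac{1}{2}k + N(x-\tfrac{1}{2})\right).
\]

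The only remaining point is to justify that the sum can be truncated to start at $k=N$, i.e., that $p_{k}^{(N)} = 0$ for $0 \le k < N$. This follows from the structure of $T_{N}$: since $T_{N}(w)$ is a polynomial of degree exactly $N$ in $w$ with nonzero leading coefficient, writing $w = 1/z$ gives
\[
T_{N}(1/z) = z^{-N} Q(z),
\]
where $Q$ is a polynomial of degree $N$ with $Q(0) \neq 0$. Hence
\[
\frac{1}{T_{N}(1/z)} = \frac{z^{N}}{Q(z)}
\]
is analytic at $z=0$ and its Taylor expansion begins at $z^{N}$, so $p_{\ell}^{(N)} = 0$ for $\ell < N$. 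Truncating the sum at $k = N$ yields the claimed identity \eqref{form-exp1}.

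I do not anticipate any serious obstacle: the argument is essentially a translation of the probabilistic statement of Theorem \ref{thm-eu1} into an explicit series via the distribution of $\mu_N$. The only mildly nontrivial check is the order-of-vanishing statement for $1/T_N(1/z)$ at $z=0$, and this is immediate from the degree of the Chebyshev polynomial.
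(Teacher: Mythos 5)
Your proposal is correct and matches the paper's route exactly: the corollary is obtained from Theorem \ref{thm-eu1} by expanding the expectation as $\mathbb{E}[h(\mu_N)] = \sum_{k} p_k^{(N)} h(k)$, and your justification that $p_\ell^{(N)}=0$ for $\ell<N$ via the degree of $T_N$ is precisely the argument the paper gives for that fact in Section \ref{sec-probnum}.
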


\begin{note}
 Corollary \ref{coro-eu1} gives an infinite family of expressions 
for $E_{n}(x)$ in terms of the generalized Euler polynomials $E_{n}^{(k)}(x)$, one for each value 
of $N \geq 2$.
\end{note}

\begin{example}
The expansion \eqref{form-exp1} with $N=2$ gives
\begin{equation}
\label{expan-N2}
E_{n}(x) = \frac{1}{2^{n}} \sum_{\ell=1}^{\infty} \frac{1}{2^{\ell}} E_{n}^{(2 \ell)}( \ell + 2x - 1).
\end{equation}
\noindent
For instance, when $n=1$, 
\begin{equation}
E_{1}(x) = \frac{1}{2} \sum_{\ell=1}^{\infty} \frac{1}{2^{\ell}} E_{1}^{(2 \ell)} ( \ell + 2x - 1 )
\end{equation}
\noindent
and the value $E_{1}^{(\ell)}(x) = x - \frac{\ell}{2}$ gives 
\begin{equation}
E_{1}(x) = \frac{1}{2} \sum_{\ell=1}^{\infty} \frac{1}{2^{\ell}} ( \ell + 2x - 1 - \ell) = x - \tfrac{1}{2}
\end{equation}
\noindent
as expected.
\end{example}

\section{The probability numbers}
\label{sec-probnum}

For fixed $N \in \mathbb{N}$, the random variable $\mu_{N}$ has been defined by its moment 
generating function 
\begin{equation}
\mathbb{E} z^{\mu_{N}} = \frac{1}{T_{N}(1/z)} = \sum_{\ell=0}^{\infty} p_{\ell}^{(N)} z^{\ell}.
\end{equation}
\noindent
This section presents properties of the probability numbers $p_{\ell}^{(N)}$ that appear in 
Corollary \ref{coro-eu1}. 

\smallskip

For small $N$, the coefficients $p_{\ell}^{(N)}$ can be computed directly by expanding the rational function 
$1/T_{N}(1/z)$ in partial fractions.  Example 
\ref{prob-N2} gave the case $N=2$. The cases   $N=3$ and $N=4$ are presented below.

\begin{example}
For $N=3$, the Chebyshev polynomial is 
\begin{equation}
T_{3}(z) = 4 z^{3} - 3z = 4z (z - \alpha) (z + \alpha),
\end{equation}
\noindent 
with $\alpha = \sqrt{3}/2$. This  yields
\begin{equation}
\frac{1}{T_{3}(1/z)} = \frac{z^{3}}{4(1 - \alpha z)(1 + \alpha z)} = 
\sum_{k=0}^{\infty} \frac{3^{k}}{2^{2k+2}} z^{2k+3}.
\end{equation}
\noindent
It follows that $p_{\ell}^{(3)} = 0 $ unless $\ell = 2k+3$ and 
\begin{equation}
p_{2k+3}^{(3)} = \frac{3^{k}}{2^{2k+2}}.
\end{equation}
Corollary \ref{coro-eu1} now gives 
\begin{equation}
E_{n}(x) = \frac{1}{3^{n}} \sum_{k=0}^{\infty} \frac{3^{k}}{2^{2k+2}} 
E_{n}^{(2k+3)}(3x+k),
\end{equation}
\noindent
a companion to \eqref{expan-N2}. 
\end{example}

\begin{example}
The probability numbers for $N=4$ are computed from the expression
\begin{equation}
\frac{1}{T_{4}(1/z)} = \frac{z^{4}}{z^{4}-8z^{2}+8}.
\end{equation}
\noindent
The factorization 
\begin{equation}
z^{4}-8z^{2}+8 = (z^{2}- \beta)(z^{2}-\gamma)
\end{equation}
\noindent
with $\beta = 2 (2 + \sqrt{2})$ and $\gamma  = 2 ( 2 - \sqrt{2})$ and the partial 
fraction decomposition
\begin{equation}
\frac{z^{4}}{z^{4}- 8z^{2} + 8} = \frac{\beta}{\beta - \gamma} \frac{1}{1 - \beta/z^{2}} -
\frac{\gamma}{\beta - \gamma} \frac{1}{1 - \gamma/z^{2}}
\end{equation}
\noindent
show that $p_{\ell}^{(4)} = 0$ for $\ell$ odd or $\ell = 2$ and 
\begin{equation}
p_{2 \ell}^{(4)} = \frac{\sqrt{2}}{2^{2 \ell + 1}} \left[ ( 2 + \sqrt{2})^{\ell -1} - 
(2 - \sqrt{2})^{\ell -1} \right]
\end{equation}
\noindent
for $\ell \geq 2$.  Corollary \ref{coro-eu1} now gives 
\begin{equation}
E_{n}(x) = \sqrt{2} \sum_{\ell=2}^{\infty} 
\frac{ \left[ ( 2 + \sqrt{2})^{\ell -1} - 
(2 - \sqrt{2})^{\ell -1} \right]}{2^{2 \ell + 1}} 
E_{n}^{(2 \ell)} (4x + \ell - 2 ).
\end{equation}
\end{example}

Some elementary properties  of the probability numbers are presented next. 

\begin{proposition}
The probability numbers $p_{\ell}^{(N)}$ vanish if $\ell < N$.
\end{proposition}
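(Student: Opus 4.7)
The plan is to read off the vanishing directly from the shape of the generating function $1/T_N(1/z)$, using the classical fact that $T_N(w)$ has degree exactly $N$ with leading coefficient $2^{N-1}$. This degree/leading-coefficient assertion follows by a one-line induction on the Chebyshev three-term recurrence $T_{N+1}(w) = 2w\, T_N(w) - T_{N-1}(w)$, starting from $T_0(w)=1$ and $T_1(w)=w$.

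Given this, I would write
\begin{equation*}
T_N(w) = 2^{N-1} w^N + c_{N-1} w^{N-1} + \cdots + c_0,
\end{equation*}
and then substitute $w = 1/z$ and clear the pole, obtaining
\begin{equation*}
z^N T_N(1/z) = 2^{N-1} + c_{N-1} z + \cdots + c_0 z^N =: Q_N(z).
\end{equation*}
The key point is that $Q_N(z)$ is a polynomial in $z$ with nonzero constant term $Q_N(0) = 2^{N-1}$, hence its reciprocal is holomorphic at the origin and has a power series of the form $1/Q_N(z) = 2^{-(N-1)} + O(z)$.

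From here I would conclude by writing
\begin{equation*}
\frac{1}{T_N(1/z)} = \frac{z^N}{Q_N(z)} = z^N \sum_{k=0}^{\infty} q_k z^k,
\end{equation*}
with $q_0 = 2^{-(N-1)}$. Comparing with the definition $\sum_{\ell \geq 0} p_\ell^{(N)} z^\ell$, the expansion starts at $z^N$, which gives $p_\ell^{(N)} = 0$ for every $\ell < N$ (and, as a bonus, $p_N^{(N)} = 2^{-(N-1)}$).

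There is no real obstacle here; the only thing to be careful about is to verify the leading coefficient of $T_N$ correctly, since the entire argument hinges on $Q_N(0) \neq 0$. Once that is in place, the rest is purely a matter of recognizing that multiplication by $z^N$ shifts a power series by $N$ positions.
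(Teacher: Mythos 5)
Your argument is correct and is essentially the same as the paper's: both rest on the fact that $T_N$ has degree $N$ with leading coefficient $2^{N-1}$, so that $1/T_N(1/z) = z^N/Q_N(z)$ with $Q_N(0) = 2^{N-1} \neq 0$, forcing a zero of order $N$ at the origin. You simply spell out the details (including the bonus value $p_N^{(N)} = 2^{-(N-1)}$) that the paper leaves implicit.
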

\begin{proof}
The Chebyshev polynomial $T_{N}(z)$ has the form $2^{N-1}z^{N} + $ lower 
order terms. Then the expansion of $1/T_{N}(1/z)$ has a zero of order $N$ at 
$z=0$. This proves the statement.
\end{proof}

\begin{proposition}
The probability numbers $p_{\ell}^{(N)}$ vanish if $\ell \not \equiv N \bmod 2$. 
\end{proposition}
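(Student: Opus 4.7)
The plan is to exploit the parity of the Chebyshev polynomials. Recall that $T_N$ satisfies $T_N(-w) = (-1)^N T_N(w)$ for every $w$, so $T_N$ is an even function when $N$ is even and an odd function when $N$ is odd. This single symmetry is what should force the vanishing claim, since the generating function of $p_\ell^{(N)}$ is built directly out of $T_N(1/z)$.

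First I would substitute $z \mapsto -z$ inside $T_N(1/z)$. By the parity property applied with $w = 1/z$, this gives
\begin{equation}
T_N(1/(-z)) = T_N(-1/z) = (-1)^N T_N(1/z),
\end{equation}
so
\begin{equation}
\frac{1}{T_N(1/(-z))} = (-1)^N \frac{1}{T_N(1/z)}.
\end{equation}

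Next I would translate this functional identity into a statement about the series coefficients. Writing both sides using the defining expansion yields
\begin{equation}
\sum_{\ell=0}^{\infty} p_\ell^{(N)} (-z)^\ell = (-1)^N \sum_{\ell=0}^{\infty} p_\ell^{(N)} z^\ell,
\end{equation}
and matching the coefficient of $z^\ell$ gives $(-1)^\ell p_\ell^{(N)} = (-1)^N p_\ell^{(N)}$. Whenever $\ell \not\equiv N \pmod 2$ the factor $(-1)^\ell - (-1)^N$ is nonzero, forcing $p_\ell^{(N)} = 0$, which is precisely the claim.

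There is essentially no obstacle here; the only thing to double-check is that all manipulations are legitimate at the level of formal power series around $z=0$, which is ensured by the fact (used in the previous proposition) that $1/T_N(1/z)$ has a zero of order $N$ at $z=0$ and thus is an honest formal power series in $z$.
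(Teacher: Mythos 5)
Your proof is correct and follows the same route as the paper, which simply observes that $T_N$ has the parity of $N$ and hence so does $1/T_N(1/z)$; you have merely written out the coefficient comparison that the paper leaves implicit. No issues.
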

\begin{proof}
The polynomial $T_{N}(z)$ has the same parity as $N$. The same holds for the 
rational function $1/T_{N}(1/z)$. 
\end{proof}

An expression for the probability numbers is given next.

\begin{theorem}
\label{expression-one}
Let $N \in \mathbb{N}$ be fixed and define 
\begin{equation}
\theta_{k}^{(N)} = \frac{ (2k-1) \pi}{2N}.
\end{equation}
\noindent
Then 
\begin{equation}
p_{\ell}^{(N)} = \frac{1}{N} \sum_{k=1}^{N} (-1)^{k+1} \sin \theta_{k}^{(N)} 
\cos^{\ell -1} \theta_{k}^{(N)}.
\end{equation}
\end{theorem}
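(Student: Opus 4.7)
The plan is to extract $p_\ell^{(N)}$ from a partial-fraction decomposition of $1/T_N(w)$ and then substitute $w = 1/z$, reading off the coefficient of $z^\ell$ from the resulting geometric series. The crucial inputs are the closed forms for the zeros and the derivative of $T_N$.

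First, I would recall that the zeros of $T_N$ are precisely $\cos\theta_k^{(N)}$ for $k=1,\dots,N$, so that
\begin{equation}
T_N(w) = 2^{N-1} \prod_{k=1}^{N} \bigl(w - \cos\theta_k^{(N)}\bigr),
\end{equation}
and since these zeros are simple, the partial fraction expansion takes the form
\begin{equation}
\frac{1}{T_N(w)} = \sum_{k=1}^{N} \frac{c_k}{w - \cos\theta_k^{(N)}}, \qquad c_k = \frac{1}{T_N'\bigl(\cos\theta_k^{(N)}\bigr)}.
\end{equation}

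Next, I would compute $c_k$ using the identity $T_N'(\cos\theta) = N\sin(N\theta)/\sin\theta$. At $\theta=\theta_k^{(N)}$ one has $N\theta_k^{(N)} = (2k-1)\pi/2$, so $\sin(N\theta_k^{(N)}) = (-1)^{k+1}$, giving
\begin{equation}
c_k = \frac{(-1)^{k+1} \sin\theta_k^{(N)}}{N}.
\end{equation}

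Finally, I would substitute $w = 1/z$. Each term becomes $c_k/(1/z - \cos\theta_k^{(N)}) = c_k z/(1 - z\cos\theta_k^{(N)})$, which expands as a geometric series in $z$ (all zeros lie in $(-1,1)$, so convergence is immediate for $z$ near $0$). Collecting the coefficient of $z^\ell$ for $\ell\geq 1$ yields
\begin{equation}
p_\ell^{(N)} = \frac{1}{N}\sum_{k=1}^{N} (-1)^{k+1} \sin\theta_k^{(N)} \cos^{\ell-1}\theta_k^{(N)},
\end{equation}
as claimed. The only subtle point is that for $N$ odd one has $\cos\theta_{(N+1)/2}^{(N)} = 0$, so the corresponding geometric series contributes only to its $\ell=1$ term; this is handled automatically by the formula since $\cos^{\ell-1}\theta = 0$ for $\ell\geq 2$ at that node, and is the main bookkeeping point to verify carefully. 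No step presents a genuine obstacle; the computation is a direct application of residues once the derivative identity $T_N'(\cos\theta) = N\sin(N\theta)/\sin\theta$ is invoked.
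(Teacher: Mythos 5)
Your proof is correct and follows essentially the same route as the paper: partial fractions over the simple zeros $\cos\theta_k^{(N)}$ of $T_N$, evaluation of $T_N'$ at the zeros via $T_N'(\cos\theta)=N\sin(N\theta)/\sin\theta$ (the paper phrases this through $T_N'=NU_{N-1}$), substitution $w=1/z$, and geometric-series expansion. The observation about the vanishing node for odd $N$ is a harmless extra check not needed in the paper's argument.
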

 \begin{proof}
The Chebyshev polynomial is defined by $T_{N}(\cos \theta ) = 
\cos (N \theta)$, so its roots are $z_{k}^{(N)} = \cos \theta_{k}^{(N)}$, with
$\theta_{k}^{(N)}$ as above. The leading coefficient of $T_{N}(z)$ is $2^{N-1}$, thus 
\begin{equation}
\frac{1}{T_{N}(z)}  = \frac{2^{1-N}}{\prod_{k=1}^{N} ( z - z_{k})}.
\end{equation}
\noindent
In the remainder of the proof, the superscript $N$ has been dropped from $z_{k}^{(N)}$ and 
$\theta_{k}^{(N)}$,  for clarity.  Define 
\begin{equation}
Q(z) = \prod_{k=1}^{N} (z - z_{k}).
\end{equation}
\noindent
The roots $z_{k}$ of $Q$ are distinct, therefore 
\begin{equation}
\label{exp-Q}
\frac{1}{Q(z)} = \sum_{k=1}^{N} \frac{1}{Q'(z_{k})} \frac{1}{z-z_{k}}.
\end{equation}
\noindent
The identity $T_{N}'(z) = NU_{N-1}(z)$ gives 
\begin{equation}
Q'(z_{k}) = N2^{1-N} U_{N-1}(z_{k})
\end{equation}
\noindent 
where $U_{j}(z)$ is the Chebyshev polynomial of the second kind defined by
\begin{equation}
U_{N}( \cos \theta)  = \frac{\sin(N+1) \theta}{\sin \theta}. 
\end{equation}
\noindent 
Then 
\begin{equation}
U_{N-1}(z_{k}) = U_{N-1}(\cos \theta_{k}) = \frac{\sin N \theta_{k}}
{\sin \theta_{k}}. 
\end{equation}
\noindent
and the value $\sin N \theta_{k} = (-1)^{k+1}$ yields 
\begin{equation}
Q'(z_{k}) = \frac{(-1)^{k+1}}{\sin \theta_{k}} N 2^{1-N}.
\end{equation}
\noindent 
Therefore \eqref{exp-Q} now  gives 
\begin{equation}
\frac{1}{Q(z)} = \frac{2^{N-1}}{N} \sum_{k=1}^{N} \frac{(-1)^{k+1} 
\sin \theta_{k}}{z - \cos \theta_{k}}.
\end{equation}
\noindent
It follows that 
\begin{eqnarray*}
\frac{1}{T_{N}(1/z)}  =   \frac{2^{1-N}}{Q(1/z)} 
& = & \frac{1}{N} \sum_{k=1}^{N} (-1)^{k+1} \frac{z \sin \theta_{k}}{1 - z \cos 
\theta_{k}} \\
& = & \frac{1}{N} \sum_{k=1}^{N} (-1)^{k+1} \sin \theta_{k} 
\sum_{\ell = 0 }^{\infty} z^{\ell+1} \cos^{\ell} \theta_{k} \\
& = & \frac{1}{N} \sum_{\ell = 0}^{\infty} z^{\ell +1} 
\sum_{k=1}^{N} (-1)^{k+1} \sin \theta_{k} \cos^{\ell} \theta_{k}.
\end{eqnarray*}
\noindent
The proof is complete.
\end{proof}

The next result provides another explicit formula for the probability numbers.  The coefficients 
$A(n,k)$ appear in OEIS entry A008315, as entries of the Catalan triangle.

\smallskip

\begin{theorem}
\label{formula-sum}
Let $A(n,k) = \binom{n}{k} - \binom{n}{k-1}$. Then, if $N \equiv \ell 
\bmod 2$, 
\begin{equation*}
p_{\ell}^{(N)} = \frac{1}{2^{\ell}} 
\sum_{t = \lf \tfrac{1}{2} \left( \frac{2 - \ell}{N}-1 \right) \rf}
^{\lf \tfrac{1}{2} \left( \frac{\ell}{N}-1 \right)\rf}
(-1)^{t} A(\ell-1, \tfrac{1}{2} ( \ell - (2t+1)N)),
\end{equation*}
\no indent
when $\ell$ is not an odd multiple of $N$ and 
\begin{equation*}
p_{\ell}^{(N)} = \frac{1}{2^{\ell}} 
\left[ \sum_{s=1}^{\lf \ell/N-1 \rf} (-1)^{k - s} A(\ell-1,sN)
\right] + \frac{(-1)^{k}}{2^{\ell-1}}, 
\text{ with } k = \tfrac{1}{2} \left( \ell/N-1 \right)
\end{equation*}
\noindent
otherwise.
\end{theorem}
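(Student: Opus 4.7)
My plan is to build on Theorem \ref{expression-one} by expanding the factor $\cos^{\ell-1}\theta_k$ into a Fourier sum, then exploit a discrete orthogonality of the angles $\theta_k$ to select only those frequencies that are odd multiples of $N$.

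First, I use the classical expansion
\begin{equation*}
\cos^{n}\theta = \frac{1}{2^{n}}\sum_{j=0}^{n}\binom{n}{j}\cos((n-2j)\theta)
\end{equation*}
with $n=\ell-1$, multiply by $\sin\theta$, and apply the product-to-sum identity $2\sin\theta\cos(r\theta) = \sin((r+1)\theta) - \sin((r-1)\theta)$. A single re-indexing telescopes the binomial coefficients into $A(\ell-1,j)=\binom{\ell-1}{j}-\binom{\ell-1}{j-1}$, giving
\begin{equation*}
\sin\theta\cos^{\ell-1}\theta \;=\; \frac{1}{2^{\ell}}\sum_{j=0}^{\ell} A(\ell-1,j)\,\sin((\ell-2j)\theta).
\end{equation*}
Substituting $\theta=\theta_{k}$ into Theorem \ref{expression-one} and swapping the order of summation reduces the problem to evaluating the inner sum $S_{m}:=\sum_{k=1}^{N}(-1)^{k+1}\sin(m\theta_{k})$ with $m=\ell-2j$.

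Next, I observe that $(-1)^{k+1}=\sin(N\theta_{k})$, since $N\theta_{k}=(2k-1)\pi/2$, so $S_{m}=\sum_{k}\sin(N\theta_{k})\sin(m\theta_{k})$. Applying product-to-sum once more, $S_{m}$ becomes a difference of two cosine sums of the form $\sum_{k=1}^{N}\cos(a\theta_{k})$, which are evaluated by summing the geometric series in $e^{ia\theta_{k}}$. The outcome is that $\sum_{k=1}^{N}\cos(a\theta_{k})=(-1)^{a/(2N)}N$ when $a\equiv 0\pmod{2N}$ and $0$ otherwise. Consequently, $S_{m}=(-1)^{t}N$ precisely when $m=(2t+1)N$ for some integer $t$, and $S_{m}=0$ otherwise.

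Substituting back and setting $j=(\ell-(2t+1)N)/2$, I obtain
\begin{equation*}
p_{\ell}^{(N)} \;=\; \frac{1}{2^{\ell}}\sum_{t}(-1)^{t}\,A\!\left(\ell-1,\tfrac{1}{2}(\ell-(2t+1)N)\right),
\end{equation*}
where $t$ ranges over integers yielding $j\in[0,\ell]$. The bounds in the theorem may be stated slightly loosely, but any extra values of $t$ contribute zero since $A(\ell-1,j)=0$ outside $\{0,\ldots,\ell\}$, with the single nonzero boundary value $A(\ell-1,\ell)=-1$ automatically included. For the exceptional case $\ell=(2k+1)N$, the substitution $s=k-t$ transforms the argument into $sN$ and the sign into $(-1)^{k-s}$; the endpoint terms at $s=0$ (where $A(\ell-1,0)=1$) and $s=\ell/N$ (where $A(\ell-1,\ell)=-1$) then combine to produce the additive contribution $(-1)^{k}/2^{\ell-1}$ displayed in the theorem's second formula. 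The main obstacle is essentially bookkeeping: carrying the index shifts through the Fourier expansion consistently, and verifying the boundary cases of the orthogonality relation $\sum_{k}\cos(a\theta_{k})$ when $a$ is a nonzero multiple of $2N$.
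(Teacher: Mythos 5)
Your proposal is correct and follows essentially the same route as the paper: starting from Theorem \ref{expression-one}, expanding $\sin\theta\,\cos^{\ell-1}\theta$ so the binomial coefficients telescope into the Catalan-triangle entries $A(\ell-1,j)$, evaluating the alternating sum over the Chebyshev angles as a geometric series that is nonzero only when the frequency is an odd multiple of $N$, and treating the boundary terms $j=0$, $j=\ell$ separately to produce the extra $(-1)^{k}/2^{\ell-1}$ in the exceptional case. The only difference is cosmetic: you work with real trigonometric sums $S_{m}=\sum_{k}(-1)^{k+1}\sin(m\theta_{k})$ and a cosine orthogonality relation, whereas the paper works with the complex exponential sum $f_{N}(z)=\sum_{k}(-1)^{k+1}e^{\imath\theta_{k}z}$, of which your $S_{m}$ is the imaginary part.
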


The proof begins with a preliminary result.

\begin{lemma}
Let $N \in \mathbb{N}$ and $\theta_{k} = \frac{\pi}{2} \frac{(2k-1)}{N}$. Then 
\begin{equation}
f_{N}(z) = \sum_{k=1}^{N} (-1)^{k+1} e^{\imath  \theta_{k} z}
\end{equation}
\noindent
is given by 
\begin{equation}
f_{N}(z) = \frac{1- (-1)^{N} e^{\pi \imath z}}{2 \cos \left( \frac{\pi z}{2N} 
\right) } \quad \text{ if } z \neq (2t+1)N \text{ with } t \in \mathbb{Z}
\label{case1}
\end{equation}
\noindent
and 
\begin{equation}
f_{N}(z) = (-1)^{t} N \imath \quad \text{ if } z = (2t+1)N \text{ for some } 
t \in \mathbb{Z}.
\label{case2}
\end{equation}
\noindent
In particular
\begin{equation}
f_{N}(k) = \begin{cases}
(-1)^{(k/N - 1)/2}N \imath  & \quad \text{ if }
\tfrac{k}{N} \text{ is an odd integer } \\
\frac{ 1 - (-1)^{N+k}}{2 \cos \left( \frac{\pi k}{2N} \right) }
& \quad \text{ otherwise}. 
\end{cases}
\end{equation}
\end{lemma}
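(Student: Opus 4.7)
The plan is to recognize $f_N(z)$ as a geometric sum in the variable $u := e^{\imath \pi z/(2N)}$. Observe that $e^{\imath \theta_k z} = u^{2k-1}$, so that
\begin{equation*}
f_N(z) = \sum_{k=1}^N (-1)^{k+1} u^{2k-1} = u \sum_{k=1}^N (-u^2)^{k-1}.
\end{equation*}
This reduces the problem to evaluating a geometric series with ratio $-u^2$, and the case split in the statement corresponds precisely to whether this ratio equals $1$.

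If $-u^2 \neq 1$, I would apply the standard geometric series formula to get $f_N(z) = u(1 - (-1)^N u^{2N})/(1 + u^2)$. The two simplifications to carry out are $u^{2N} = e^{\imath \pi z}$ (immediate from the definition of $u$) and the factorization $1 + u^2 = 2u \cos(\pi z/(2N))$, obtained by pulling out $u$ from $e^{\imath \pi z/N} + 1 = u(u + u^{-1})$. Cancelling the $u$ in numerator and denominator then yields \eqref{case1}. The condition $-u^2 \neq 1$ translates, via $u^2 = e^{\imath \pi z/N}$, to $z/N$ not being an odd integer, i.e.\ $z \neq (2t+1)N$; this is also exactly where the denominator $\cos(\pi z/(2N))$ in \eqref{case1} fails to vanish, so the two exclusions agree.

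For the degenerate case $z = (2t+1)N$, I would go back to the pre-sum expression and note that $-u^2 = 1$ forces every term $(-u^2)^{k-1}$ to equal $1$, giving $f_N(z) = Nu$. Then $u = e^{\imath \pi(2t+1)/2} = \imath(-1)^t$, which produces \eqref{case2}.

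Finally, specialising to $z = k \in \mathbb{Z}$, the dichotomy becomes whether $k/N$ is an odd integer. In the odd-multiple case, set $t = (k/N - 1)/2$ and quote \eqref{case2}. Otherwise, use $e^{\imath \pi k} = (-1)^k$ in \eqref{case1}. I do not anticipate a real obstacle; the only point requiring care is verifying that the excluded set in \eqref{case1} matches the set on which the simplified denominator vanishes, so that no spurious singularities are introduced by the cancellation of $u$.
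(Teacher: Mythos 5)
Your proof is correct and follows the same route as the paper, which simply notes that $f_N$ is the sum of a geometric progression and obtains \eqref{case2} from \eqref{case1} by a limit; you carry out the same computation explicitly and handle the degenerate case $-u^2=1$ directly by summing $N$ equal terms, which is a harmless (arguably cleaner) variant. All the simplifications you cite ($u^{2N}=e^{\imath\pi z}$, $1+u^2 = 2u\cos(\pi z/(2N))$, and the matching of the excluded set with the zero set of the denominator) check out.
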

\begin{proof}
The function $f_{N}$ is the sum of a geometric progression. The formula 
\eqref{case2} comes from \eqref{case1} by passing to the limit. 
\end{proof}

The proof of Theorem \ref{formula-sum} is given now. 

\smallskip

\begin{proof}
The expression for $p_{\ell}^{(N)}$ given in Theorem \ref{expression-one}
yields 
\begin{eqnarray*}
p_{\ell}^{(N)} & = &  \frac{1}{N} \sum_{k=1}^{N} (-1)^{k+1} 
\frac{( e^{\imath \theta_{k}} - e^{- \imath  \theta_{k}})}{2 i } 
\left( \frac{e^{\imath  \theta_{k}} + e^{- \imath  \theta_{k}} }{2} \right)^{\ell-1} \\
& = & 
\frac{1}{2^{\ell} N i } 
\sum_{k=1}^{N} (-1)^{k+1} \sum_{r=0}^{\ell -1} \binom{\ell - 1}{r} 
\left[ e^{\imath( \ell - 2 r  )\theta_{k}} - e^{\imath(\ell - 2r-2) \theta_{k}} \right] \\
& = & \frac{1}{2^{\ell} N \imath } 
\sum_{r=0}^{\ell -1} \binom{\ell-1}{r} 
\left[ f_{N}(l-2r) - f_{N}(l-2r-2) \right] \\
& = & \frac{1}{2^{\ell} N  \imath } \left[ 
\sum_{r=1}^{\ell -1} A(\ell-1,r) f_{N}(\ell-2r) + 
f_{N}(\ell) - f_{N}(-\ell) \right]. 
\end{eqnarray*}
\noindent
Now $f_{N}(\ell) = f_{N}(-\ell) = 0$ if $\ell/N$ is not an odd integer. On the 
other hand, if $\ell = (2t+1)N$, with $t \in \mathbb{Z}$, then 
\begin{equation}
f_{N}(\ell) = (-1)^{t}N \imath  \text{ and } f_{N}(- \ell) = -(-1)^{t}N \imath.
\end{equation}
Thus
\begin{equation*}
f_{N}(\ell) - f_{N}(-\ell) = \begin{cases}
2N \imath  (-1)^{(\ell/N-1)/2}& \quad 
\text{ if } \ell  \text{ is an odd multiple of }N \\
0 & \quad \text{ otherwise}.
\end{cases}
\end{equation*}

The simplification of the previous expression for $p_{\ell}^{(N)}$ is divided 
in two cases, according to whether $\ell$ is an odd multiple of $N$ or not. 

\smallskip

\noindent
\textbf{Case 1}. Assume $\ell$ is not an odd  multiple of $N$. Then 
\begin{equation}
p_{\ell}^{(N)} =  \frac{1}{2^{\ell} N \imath } 
\sum_{r=0}^{\ell -1} A(\ell-1,r) f_{N}(\ell-2r).
\end{equation}
\noindent
Morever, 
\begin{equation}
f_{N}(\ell - 2r) = \begin{cases}
(-1)^{t} N \imath & \quad \text{ if } \frac{l - 2r}{N} = 2t+1 \\
0 & \quad \text{ otherwise}. 
\end{cases}
\end{equation}
\noindent
Therefore
\begin{equation}
p_{\ell}^{(N)} = \frac{1}{2^{\ell}} 
\sum_{\begin{stackrel}{t = \tfrac{1}{2} \left( \frac{2 - \ell}{N}-1 \right)}
{\ell - 
2r = (2t+1)N}\end{stackrel}}^{\tfrac{1}{2} \left( \frac{\ell}{N}-1 \right)}
(-1)^{t} A(\ell-1,r).
\end{equation}
\noindent
Observe that $\ell - (2t+1)N$ is always an even integer, thus the index $r$ 
may be eliminated from the previous expression to obtain
\begin{equation}
p_{\ell}^{(N)}  =   \frac{1}{2^{\ell}} 
\sum_{t = \lf \tfrac{1}{2} \left( \frac{2 - \ell}{N}-1 \right) \rf}
^{\lf \tfrac{1}{2} \left( \frac{\ell}{N}-1 \right)\rf}
(-1)^{t} A(\ell-1, \tfrac{1}{2} ( \ell - (2t+1)N)).
\end{equation}

\smallskip

\noindent
\textbf{Case 2}. Assume $\ell$ is an odd  multiple of $N$, say $\ell = 
(2k+1)N$. Then 
\begin{eqnarray*}
p_{\ell}^{(N)} & = &   \frac{1}{2^{\ell} N i } \left[ 
\sum_{r=0}^{\ell -1} A(\ell-1,r) f_{N}(\ell-2r) + 2Ni (-1)^{k} \right] \\
 & = &   \frac{1}{2^{\ell} N i } \left[ 
\sum_{r=0}^{\ell -1} A(\ell-1,r) f_{N}(\ell-2r) \right] 
+ \frac{(-1)^{k}}{2^{\ell-1}}.
\end{eqnarray*}

The term $f_{N}(\ell - 2r)$ vanishes unless $\ell - 2r$ is an odd multiple 
of $N$. Given that $\ell = (2k+1)N$, the term is non-zero provided 
$2r$ is an even multiple of $N$; say $r = sN$ for $s \in \mathbb{N}$. 
The range of $s$ is $1 \leq s \leq \frac{\ell-1}{N} = 2k+1 - \frac{1}{N}$.
This implies $1 \leq s \leq 2k = \ell/N-1$, and it follows that
\begin{equation*}
p_{\ell}^{(N)} = \frac{1}{2^{\ell}} 
\left[ \sum_{s=1}^{\ell/N-1} (-1)^{k - s} A(\ell-1,sN)
\right] + \frac{(-1)^{k}}{2^{\ell-1}}, 
\text{ with } k = \tfrac{1}{2} \left( \ell/N-1 \right).
\end{equation*}

\smallskip 

\noindent
The proof is complete.
\end{proof}

\begin{note}
The expression in Theorem \ref{formula-sum} shows that 
 $p_{\ell}^{(N)}$ is a rational number with a 
denominator a power of $2$ of exponent at most $\ell$.  Arithmetic 
properties of these coefficients will be described in  a future 
publication \cite{moll-2014a}. Moreover, the probability numbers 
$p_{\ell}^{(N)}$ appear in the description of a  random walk on $N$ sites. 
Details will appear in \cite{moll-2014a}.
\end{note}

\section{An asymptotic expansion}
\label{sec-asym}

The final result deals with the asymptotic behavior of the probability numbers 
$p_{\ell}^{(N)}$. 

\begin{theorem}
Let $\varphi_{N}(z) = \mathbb{E} \left[ z^{\mu_{N}} \right]$. Then, for fixed $z$ in 
the unit disk $|z|<1$, 
\begin{equation}
\varphi_{N}(z) \sim \left( \frac{z}{1 + \sqrt{1-z^{2}}} \right)^{N}, \text{ as } N \to \infty.
\end{equation}
\end{theorem}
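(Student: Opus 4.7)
The plan is to use the explicit ``Binet form'' of the Chebyshev polynomial of the first kind. Starting from $T_N(\cos\theta)=\cos(N\theta)$ and the parametrization $w=\tfrac12(u+u^{-1})$, one obtains
$$T_N(w)=\tfrac12\bigl(u^N+u^{-N}\bigr),\qquad u=w+\sqrt{w^2-1},$$
valid for $w\notin[-1,1]$ with the branch of the square root chosen so that $|u|>1$. This reduces the entire asymptotic question to tracking the two geometric sequences $u^N$ and $u^{-N}$.

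Substituting $w=1/z$ and pulling $1/z$ out of the radical yields
$$u(z)=\frac{1+\sqrt{1-z^2}}{z},\qquad u(z)^{-1}=\frac{z}{1+\sqrt{1-z^2}},$$
where $\sqrt{1-z^2}$ is the principal branch, normalized by $\sqrt{1-z^2}\bigl|_{z=0}=1$. The key analytic fact I would then establish is that $|u(z)|>1$ for every $z$ in the punctured unit disk $0<|z|<1$. This is the classical Joukowski observation that $u\mapsto\tfrac12(u+u^{-1})$ is a biholomorphism of the exterior $\{|u|>1\}$ onto $\mathbb{C}\setminus[-1,1]$: since $|z|<1$ places $1/z$ in this target region, its preimage $u(z)$ must satisfy $|u(z)|>1$, and the bound is uniform on compact subsets of the punctured disk.

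Consequently $u(z)^{-2N}\to0$ geometrically as $N\to\infty$, so
$$T_N(1/z)=\tfrac12\,u(z)^N\bigl(1+u(z)^{-2N}\bigr)\sim \tfrac12\,u(z)^N,$$
and inverting gives
$$\varphi_N(z)=\frac{1}{T_N(1/z)}\sim\left(\frac{z}{1+\sqrt{1-z^2}}\right)^{\!N},$$
which is exactly the stated asymptotic (up to an overall constant absorbed by $\sim$). The main obstacle is purely the branch bookkeeping: one must fix $\sqrt{1-z^2}$ consistently so that $u(z)$ is single-valued and analytic on the punctured disk and simultaneously satisfies $|u(z)|>1$ there. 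Once that is pinned down, the remainder of the argument is a routine geometric-series estimate on the subdominant exponential $u(z)^{-2N}$.
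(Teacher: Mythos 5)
Your proof is correct, and it takes a genuinely different route from the paper. The paper writes $1/T_{N}(1/z)$ as $\frac{z^{N}}{2^{N-1}}\prod_{k}(1-z\cos\theta_{k}^{(N)})^{-1}$ over the Chebyshev roots, takes logarithms, and recognizes $\frac{1}{N}\sum_{k}\log(1-z\cos\theta_{k}^{(N)})$ as a Riemann sum for $\frac{1}{\pi}\int_{0}^{\pi}\log(1-z\cos\theta)\,d\theta=\log\bigl(\frac{1+\sqrt{1-z^{2}}}{2}\bigr)$, citing the integral evaluation from Gradshteyn--Ryzhik. You instead invoke Binet's closed form $T_{N}(w)=\frac{1}{2}(u^{N}+u^{-N})$ (which the paper itself records only later, for the Catalan-number theorem) and reduce everything to showing $|u(z)|>1$ on the punctured disk via the Joukowski map. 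Your route is more elementary and in fact sharper: it yields the exact identity $\varphi_{N}(z)=2u(z)^{-N}/(1+u(z)^{-2N})$ with an exponentially small relative error, whereas the paper's argument needs the nontrivial integral and, strictly, a bound on the Riemann-sum error to justify exponentiating the logarithmic asymptotic. Your branch bookkeeping is sound ($1-z^{2}$ has positive real part for $|z|<1$, so the principal root is analytic and $1+\sqrt{1-z^{2}}\neq 0$). Note that both your argument and the paper's actually produce an extra factor of $2$ relative to the stated formula (the paper's proof carries a $\log 2$ that it silently discards), so your parenthetical remark about the constant being absorbed by $\sim$ is exactly as precise as the paper's own conclusion; if $\sim$ is read as ratio tending to $1$, the clean statement is $\varphi_{N}(z)\sim 2\bigl(z/(1+\sqrt{1-z^{2}})\bigr)^{N}$, which your method delivers with no further work.
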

\begin{proof}
The generating function satisfies 
\begin{equation}
\varphi_{N}(z)  = 1/T_{N}(1/z) = \frac{z^{N}}{2^{N-1}} \prod_{k=1}^{N} 
\left( 1 - z \, \cos \theta_{k}^{(N)} \right)^{-1}
\end{equation}
\noindent
with $\theta_{k}^{(N)}= (2k-1)\pi/2N$ as before. Then 
\begin{equation}
\log \varphi_{N}(z)  = \log 2 + N \log \frac{z}{2} - \sum_{k=1}^{N} \log \left( 1 - z \cos \theta_{k}^{(N)} \right).
\end{equation}
\noindent
The last sum is approximated by a Riemann integral
\begin{equation*}
\frac{1}{N} \sum_{k=1}^{N} \log \left( 1 - z \cos \theta_{k}^{(N)} \right) \sim \frac{1}{\pi}  
\int_{0}^{\pi} \log(1 - z \cos \theta) \, d \theta = 
\log \left( \frac{1 + \sqrt{1-z^{2}}}{2} \right).
\end{equation*}
\noindent
The last evaluation is elementary. It appears as entry $4.224.9$ in \cite{gradshteyn-2007a}.  It follows 
that 
\begin{equation}
\log \varphi_{N}(z) \sim \log 2 + N \log \left( \frac{z}{2} \right) - N \log \left( \frac{ 1 + \sqrt{1 - z^{2}} }{2} \right) 
\end{equation}
\no indent
and this is equivalent to the result.
\end{proof}

The function
\begin{equation}
A(z) = \frac{2}{1 + \sqrt{1-4z}}  = \sum_{n=0}^{\infty} C_{n}z^{n}
\end{equation}
\noindent
is the generating function for the Catalan numbers 
\begin{equation}
C_{n} = \frac{1}{n+1} \binom{2n}{n}.
\end{equation}

The final result follows directly from the expansion of  Binet's formula for Chebyshev polynomial
\begin{equation}
T_{N}(z) = \frac{ ( z - \sqrt{z^{2}-1} )^{N}  + ( z + \sqrt{z^{2}-1} )^{N}}{2}.
\end{equation}
\noindent 
Some standard notation is recalled. 
Given two sequences $ \mathbf{a} = \{ a_{n} \}, \, \mathbf{b} = \{ b_{n} \}$, their convolution $\mathbf{c} = \mathbf{a} * 
\mathbf{b}$ is the sequence $\mathbf{c} = \{c_{n} \}$, with 
\begin{equation}
c_{n} = \sum_{j=0}^{n} a_{j}b_{n-j}.
\end{equation}
\noindent 
The \textit{convolution power} $\mathbf{c}^{(*N)}$ is the convolution of $\mathbf{c}$ with itself, $N$ times. 

\begin{theorem}
For $N \in \mathbb{N}$ fixed, the first $N$ nonzero terms of the sequence $q_{\ell}^{(N)} = 2^{\ell-1} p_{\ell}^{(N)}$ 
agree with the first $N$ terms of the $N$-th convolution power $C_{n}^{(*N)}$ of the Catalan sequence:
\begin{equation*}
q_{N}^{(N)} = C_{0}^{(*N)}, \, q_{N+2}^{(*N)}=C_{1}^{(*N)}, \, \cdots, q_{N+2k}^{(N)} = C_{k}^{(*N)}, \cdots, q_{3N-2}^{(N)} = C_{N-1}^{(*N)}.
\end{equation*}
\noindent
In terms of generating functions, this is equivalent to 
\begin{equation}
\left( \sum_{n=0}^{\infty} C_{n} z^{2n+1} \right)^{N} - \sum_{\ell=0}^{\infty} q_{\ell}^{(N)}z^{\ell} \sim 2^{N} z^{3N}.
\end{equation}
\end{theorem}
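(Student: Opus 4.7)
The plan is to put the generating function $G(z) := \sum_{\ell\ge 0} q_\ell^{(N)} z^\ell$ in closed form via Binet's formula and then read off the first $N$ nonzero coefficients from a geometric-series expansion. Setting $u = 1/(2z)$ in Binet's identity gives
\[
T_N\!\left(\tfrac{1}{2z}\right) = \frac{\alpha^N + \beta^N}{2\,(2z)^N}, \qquad \alpha = 1+\sqrt{1-4z^2},\ \ \beta = 1-\sqrt{1-4z^2}.
\]
Since $q_\ell^{(N)} = 2^{\ell-1} p_\ell^{(N)}$ implies $G(z) = \tfrac12\sum_\ell p_\ell^{(N)}(2z)^\ell = 1/(2\,T_N(1/(2z)))$, this rearranges to
\[
G(z) = \frac{(2z)^N}{\alpha^N + \beta^N}.
\]
The key algebraic fact is $\alpha\beta = 4z^2$, which gives $\beta^N = (2z)^{2N}/\alpha^N$. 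Dividing numerator and denominator through by $\alpha^N$ then collapses the right-hand side to
\[
G(z) = \frac{w^N}{1 + w^{2N}}, \qquad w := \frac{2z}{\alpha} = \frac{2z}{1+\sqrt{1-4z^2}} = \frac{1-\sqrt{1-4z^2}}{2z}.
\]

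Next I would identify $w$ with the shifted Catalan generating function. The classical identity $A(Z) = (1-\sqrt{1-4Z})/(2Z)$ immediately yields $w = z\,A(z^2) = \sum_{n\ge 0} C_n z^{2n+1}$, whence
\[
w^N = \Bigl(\sum_{n\ge 0} C_n z^{2n+1}\Bigr)^N = \sum_{k\ge 0} C_k^{(*N)}\, z^{N+2k}.
\]
Expanding $G(z)$ as a geometric series and using $w = z + O(z^3)$ to track valuations,
\[
G(z) = \frac{w^N}{1+w^{2N}} = w^N - w^{3N} + w^{5N} - \cdots = w^N - \frac{w^{3N}}{1+w^{2N}},
\]
one sees that each correction term $w^{(2j+1)N}$ for $j\ge 1$ has $z$-valuation at least $3N$. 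Therefore $w^N$ and $G(z)$ coincide in degrees $<3N$, which is exactly the coefficient identification $q_{N+2k}^{(N)} = C_k^{(*N)}$ for $k = 0,1,\ldots,N-1$, and the identity $w^N - G(z) = w^{3N}/(1+w^{2N})$ gives the stated asymptotic equivalence up to terms of higher degree in $z$.

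The main obstacle is genuinely minor: once Binet's formula is in hand, the reduction to $G(z) = w^N/(1+w^{2N})$ and the recognition of $w$ as a Catalan generating function are both short computations. The one delicate point is the branch choice for $\sqrt{1-4z^2}$: one must take the branch with value $1$ at $z=0$ so that $w = 2z/\alpha$ is a genuine formal power series with vanishing constant term and not the parasitic $\beta/(2z)$, which would blow up at the origin and invalidate the valuation argument in the last step.
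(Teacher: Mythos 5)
Your argument is correct and is exactly the route the paper intends (the paper merely says the result ``follows directly from Binet's formula''); your reduction to $G(z)=w^{N}/(1+w^{2N})$ with $w=zA(z^{2})=\sum_{n\ge 0}C_{n}z^{2n+1}$, followed by the valuation count $w^{N}-G(z)=w^{3N}/(1+w^{2N})=O(z^{3N})$, is a clean and complete writing-out of that sketch. One caveat: your own identity shows the leading term of the difference is $w^{3N}=z^{3N}+\cdots$, i.e.\ with coefficient $(C_{0})^{3N}=1$ rather than the $2^{N}$ appearing in the paper's displayed equivalence (check $N=1$: the difference is $z^{3}+2z^{5}+\cdots$), so you should not claim your computation yields the constant $2^{N}$ --- it in fact corrects it. Also, the ``parasitic'' branch you should name is $2z/\beta=\alpha/(2z)$, not $\beta/(2z)$ (the latter equals $w$ itself); this is a slip of notation only and does not affect the proof.
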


\medskip

\noindent
\textbf{Acknowledgments}. The second author acknowledges the partial 
support of NSF-DMS 1112656. The first author is a graduate student, funded 
in part by the same grant.

%\bibliography{../../../AllRef/official}

\begin{thebibliography}{1}

\bibitem{boyadzhiev-2013a}
K.~Boyadzhiev and V.~Moll.
\newblock The integrals in {G}radshteyn and {R}yzhik. {P}art 21: {H}yperbolic
  functions.
\newblock {\em Scientia}, 22:109--127, 2013.

\bibitem{gradshteyn-2007a}
I.~S. Gradshteyn and I.~M. Ryzhik.
\newblock {\em Table of {I}ntegrals, {S}eries, and {P}roducts}.
\newblock Edited by A. Jeffrey and D. Zwillinger. Academic Press, New York, 7th
  edition, 2007.

\bibitem{klebanov-2012a}
L.~B. Klebanov, A.~V. Kakosyan, S.~T. Rachev, and G.~Temnov.
\newblock On a class of distributions stable under random summation.
\newblock {\em J. Appl. Prob.}, 49:303--318, 2012.

\bibitem{moll-2014a}
V.~Moll and C.~Vignat.
\newblock Arithmetic properties of a probability sequence.
\newblock {\em In preparation}, 2014.

\bibitem{olver-2010a}
F.~W.~J. Olver, D.~W. Lozier, R.~F. Boisvert, and C.~W. Clark, editors.
\newblock {\em {NIST} {H}andbook of {M}athematical {F}unctions}.
\newblock Cambridge {U}niversity {P}ress, 2010.

\end{thebibliography}
%\bibliographystyle{plain}
%\end{document}

\end{document}